\newtheorem{thm}{Theorem}[section]
\newtheorem{lem}[thm]{Lemma}
\theoremstyle{definition}
\newtheorem{dfn}[thm]{Definition}
\newdefinition{rmk}[thm]{Remark}
\theoremstyle{remark}
\numberwithin{equation}{section}
\journal{Journal of \LaTeX\ Templates}
\def\ps@pprintTitle{%
   \let\@oddhead\@empty
   \let\@evenhead\@empty
   \def\@oddfoot{\reset@font\hfil\thepage\hfil}
   \let\@evenfoot\@oddfoot
}
\begin{document}

\begin{frontmatter}

\title{A note on unitary Cayley graphs of matrix algebras}
\author[]{Yihan Chen\corref{cor1}}
\ead{2016750518@smail.xtu.edu.cn}
\author[]{Bicheng Zhang\corref{cor1}}
\ead{zhangbicheng@xtu.edu.cn}

\address{School of Mathematics and Computational Science, Xiangtan Univerisity, Xiangtan, Hunan, 411105, PR China}
\cortext[cor1]{Corresponding author}



%

\begin{abstract}
Dariush Kiani et al.\cite{kiani2015unitary} claim to have found the unitary Cayley graph $Cay(M_{n}(F),GL_{n}(F))$ of matrix algebras over finite field $F$ is strongly regular only when $n=2$, but they have only considered  two special cases, namely when $n = 2$ and $3$ and have failed to cover the general cases. In this paper, we prove that the unitary Cayley graph of matrix algebras over finite field $F$ is strongly regular if and only if $n=2$.
\end{abstract}
\begin{keyword}
Strongly regular graph;\;Unitary Cayley graph;\;Finite field;\;Matrix algebra
\end{keyword}
\end{frontmatter}
\section{Introduction}
In graph theory, it is of great significance to study the construction and characterization of strongly regular graphs(SRG). The authors of \cite{kiani2015unitary} claimed in their abstract that $n=2$ is a necessary and sufficient condition for $Cay(M_{n}(F),GL_{n}(F))$ to be a SRG, but they only proved $n=2$ is a sufficient condition in the body of their paper. As supplementary, we prove that $Cay(M_{n}(F),GL_{n}(F))$ is SRG if and only if $n=2$.

\section{Preliminaries}
Let $F$ be a finite field, $M_{n}(F)$ be the $n$-dimensional matrix algebra over $F$, $GL_{n}(F)$ be the general linear group.
\begin{dfn}[Unitary Cayley graph]
Unitary Cayley graph of $M_{n}(F)$ denote by $Cay(M_{n}(F),\\GL_{n}(F))$ or $G_{M_{n}(F)}$ is a graph with vertex set $M_{n}(F)$ and edge set $\{\{A,B\}|A-B \in GL_{n}(F)\}$.
\end{dfn}
\begin{dfn}[Strongly regular graph]
A strongly regular graph with parameter $(n,k,\lambda,\mu)$ is a graph $G$ of order $n$ satisfying the following three conditions: \begin{itemize}
                                                                                                    \item Every vertex adjacents to exactly $k$ vertices.
                                                                                                    \item For any two adjacent vertices $x,y$, there are exactly $\lambda$ vertices adjacent to both $x$ and $y$.
                                                                                                    \item For any two non-adjacent vertices $x,y$, there are exactly $\mu$ vertices adjacent to both $x$ and $y$.
                                                                                                  \end{itemize}
\end{dfn}
\begin{dfn}[Linear derangement]
 $A \in M_{n}(F)$ is a linear derangement if $A \in GL_{n}(F)$ and does not fix any non-zero vector, in other words, $0$ and $1$ are not eigenvalues of $A$.
\end{dfn}
Let $E_{ij}$ be the $n \times n$ matrix with $1$ in the $(i,j)$-position as its only non-zero element, $e_{n}$ be the number of linear derangements in $M_{n}(F)$ and set $e_{0}=0$.
\begin{thm}[See \cite{morrison2006integer}]Let $F$ be a finite field of order $q$, then
$e_{n}=e_{n-1}(q^n-1)q^{n-1}+(-1)^nq^{\frac{n(n-1)}{2}}.$
\end{thm}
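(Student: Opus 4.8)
The plan is to recast the definition as a counting problem over invertible matrices, evaluate that count by inclusion--exclusion over the fixed subspace, and then read the recursion off the resulting closed form.

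First I would note that $A$ is a linear derangement precisely when $A$ and $A-I$ are both invertible, so $e_n=\#\{A\in GL_n(F):\ker(A-I)=0\}$ counts invertible matrices with trivial fixed space $V_1(A):=\ker(A-I)$. To access this, for a subspace $W\le F^n$ put $G(W)=\#\{A\in GL_n(F):A|_W=\mathrm{id}\}$, the number of invertible matrices fixing $W$ pointwise. Writing $A$ in block form relative to a decomposition $F^n=W\oplus U$ shows that $G(W)$ depends only on $k=\dim W$: the block above the diagonal is arbitrary and the bottom-right block must be invertible, so $G(W)=q^{k(n-k)}|GL_{n-k}(F)|$. Since $A|_W=\mathrm{id}$ is equivalent to $W\subseteq V_1(A)$, we have $G(W)=\sum_{W'\supseteq W}\#\{A\in GL_n(F):V_1(A)=W'\}$.

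Next I would invert this relation by M\"obius inversion on the subspace lattice, whose M\"obius function from $\{0\}$ to a $k$-dimensional subspace equals $(-1)^k q^{\binom{k}{2}}$, together with the fact that there are $\binom{n}{k}_q$ subspaces of dimension $k$. This produces the closed form
\[
e_n=\sum_{k=0}^{n}(-1)^k q^{\binom{k}{2}}\binom{n}{k}_q q^{k(n-k)}|GL_{n-k}(F)|.
\]
The decisive step is the simplification: substituting $|GL_m(F)|=q^{\binom{m}{2}}\gamma_m$ with $\gamma_m=\prod_{i=1}^{m}(q^i-1)$ and $\binom{n}{k}_q=\gamma_n/(\gamma_k\gamma_{n-k})$, and invoking the identity $\binom{k}{2}+k(n-k)+\binom{n-k}{2}=\binom{n}{2}$, all $k$-dependent powers of $q$ collapse to the constant $q^{\binom{n}{2}}$, leaving
\[
e_n=q^{\binom{n}{2}}\gamma_n\sum_{k=0}^{n}\frac{(-1)^k}{\gamma_k}.
\]

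Finally I would extract the recursion from this partial sum. With $S_n=\sum_{k=0}^{n}(-1)^k/\gamma_k$ one has $S_n=S_{n-1}+(-1)^n/\gamma_n$, hence $e_n=q^{\binom{n}{2}}\gamma_n S_{n-1}+(-1)^n q^{\binom{n}{2}}$; replacing $S_{n-1}$ by $e_{n-1}/(q^{\binom{n-1}{2}}\gamma_{n-1})$ and using $\binom{n}{2}-\binom{n-1}{2}=n-1$ and $\gamma_n/\gamma_{n-1}=q^n-1$ yields exactly $e_n=e_{n-1}(q^n-1)q^{n-1}+(-1)^n q^{n(n-1)/2}$. I expect the main obstacle to lie in the second paragraph: pinning down $G(W)=q^{k(n-k)}|GL_{n-k}(F)|$ and applying the subspace-lattice M\"obius function with the correct sign and $q$-power, and then recognizing the exponent identity that telescopes the whole alternating sum into a single partial sum. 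As a consistency check, the $n=0$ instance of the closed form returns $e_0=1$, which is the initial value the recursion requires.
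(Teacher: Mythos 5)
Your proposal is correct, but there is nothing in the paper to compare it against: the recursion is stated with the tag ``See \cite{morrison2006integer}'' and imported without proof, so yours is a self-contained argument where the paper has none. Every step checks out. The identification of $e_n$ with invertible $A$ having $\ker(A-I)=0$ is exactly the paper's definition of linear derangement; the block computation giving $G(W)=q^{k(n-k)}\,|GL_{n-k}(F)|$ is right, since $A|_W=\mathrm{id}$ forces the shape $\bigl(\begin{smallmatrix}I_k & B\\ 0 & C\end{smallmatrix}\bigr)$ with $B$ arbitrary and $C$ invertible; the M\"obius function value $\mu(0,W)=(-1)^k q^{\binom{k}{2}}$ on the subspace lattice is the standard Hall--Weisner fact, and the inversion of $G(W)=\sum_{W'\supseteq W}\#\{A: V_1(A)=W'\}$ is legitimate on this finite lattice. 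The exponent identity $\binom{k}{2}+k(n-k)+\binom{n-k}{2}=\binom{n}{2}$ does collapse the sum to $e_n=q^{\binom{n}{2}}\gamma_n\sum_{k=0}^n(-1)^k/\gamma_k$, and the telescoping extraction of the recursion, using $\gamma_n/\gamma_{n-1}=q^n-1$ and $\binom{n}{2}-\binom{n-1}{2}=n-1$, is exactly right. As a sanity check beyond yours: at $n=2$ your closed form gives $q\,(q-1)(q^2-1)\bigl(1-\tfrac{1}{q-1}+\tfrac{1}{(q-1)(q^2-1)}\bigr)=q^4-2q^3-q^2+3q$, matching the value of $e_2$ used in the paper (Lemma \ref{lemma-2} and the parameter $\lambda$ in Theorem \ref{DKthm}). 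Whether Morrison's original derivation proceeds the same way is beside the point here; your route via M\"obius inversion is elementary and complete.

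One remark your consistency check turns up, to your credit: you find $e_0=1$, whereas the paper declares ``set $e_0=0$''. Your value is the correct one for the theorem as stated, since with $e_0=0$ the recursion at $n=1$ would yield $e_1=-1$ rather than the true count $e_1=q-2$ of field elements distinct from $0$ and $1$. So the paper's convention either contains a typo or tacitly restricts the recursion to $n\ge 2$; your closed form resolves the ambiguity in favor of $e_0=1$.
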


\begin{thm}[See \cite{kiani2015unitary}]\label{DKthm}
	Let $F$ be a finite field with order of $q$. Then ${{G}_{{{M}_{2}}}}(F)$ is strongly regular graph with parameters $(q^4, q^4-q^3- q^2 + q, q^4 - 2q^3 - q^2 + 3q, q^4 - 2q^3 + q)$.
\end{thm}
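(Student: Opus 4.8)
The plan is to exploit the rich automorphism structure of $G_{M_2}(F)$ to reduce every parameter to a single reference configuration, and then to evaluate that configuration by elementary counting. First I would record that $G_{M_2}(F)$ is vertex-transitive: for each fixed $C\in M_2(F)$ the translation $A\mapsto A+C$ preserves differences and is therefore a graph automorphism, so it suffices to compute all parameters relative to the zero matrix $0$. The order is immediate, $|M_2(F)|=q^4$, and the valency is $k=|GL_2(F)|=(q^2-1)(q^2-q)=q^4-q^3-q^2+q$, since the neighbours of $0$ are exactly the invertible matrices (note $0\notin GL_2(F)$, so no vertex is its own neighbour).

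The structural heart of the argument is that the maps $A\mapsto UAV$ with $U,V\in GL_2(F)$ are also automorphisms, because $U(A-B)V$ is invertible precisely when $A-B$ is; moreover these maps fix $0$ and act transitively on each set of matrices of a fixed rank, rank being the complete invariant for $GL\times GL$-equivalence. Consequently, to establish strong regularity it is enough to observe that any edge $\{x,y\}$ can be carried, by a translation followed by such a map, to the edge $\{0,I\}$ (since $y-x$ has rank $2$), and any non-edge $\{x,y\}$ with $x\neq y$ to the pair $\{0,E_{11}\}$ (since then $y-x$ is a nonzero singular matrix, necessarily of rank $1$). This shows at once that the common-neighbour counts $\lambda$ and $\mu$ are well defined, which is the only nontrivial point in passing from regularity to strong regularity.

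It then remains to evaluate the two counts. For $\lambda$ I would count the common neighbours of $0$ and $I$: a matrix $z$ is adjacent to both exactly when $z$ and $z-I$ are invertible, i.e.\ when neither $0$ nor $1$ is an eigenvalue of $z$. By definition this is $e_2$, the number of linear derangements; evaluating it by inclusion--exclusion (there being $q^3+q^2-q$ singular matrices and $q^2+q$ matrices having both $0$ and $1$ as eigenvalues, the latter forming the conjugacy class of $\mathrm{diag}(0,1)$) gives $\lambda=q^4-2q^3-q^2+3q$, in agreement with the recurrence for $e_n$. For $\mu$ I would count the common neighbours of $0$ and $E_{11}$: writing $z=\begin{pmatrix}a&b\\c&d\end{pmatrix}$, the conditions are $\det z=ad-bc\neq 0$ and $\det(z-E_{11})=(a-1)d-bc\neq 0$. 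A short inclusion--exclusion over the two singularity loci --- whose intersection forces $d=0$ and $bc=0$, yielding $2q^2-q$ solutions --- gives $\mu=q^4-2(q^3+q^2-q)+(2q^2-q)=q^4-2q^3+q$.

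The main obstacle I anticipate is not the algebra but getting the reductions exactly right: one must verify carefully that the $GL\times GL$-action is transitive on each rank stratum and that a non-edge in this graph always corresponds to a rank-$1$ difference, so that the single representative $E_{11}$ legitimately computes $\mu$ for every non-adjacent pair (and $I$ computes $\lambda$ for every adjacent pair). Once that is in place, the two inclusion--exclusion counts are routine and deliver the stated parameters $(q^4,\;q^4-q^3-q^2+q,\;q^4-2q^3-q^2+3q,\;q^4-2q^3+q)$.
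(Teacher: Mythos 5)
Your proposal is correct: the reduction via translations and the $GL\times GL$-action to the representatives $\{0,I\}$ (rank $2$) and $\{0,E_{11}\}$ (rank $1$), followed by inclusion--exclusion, yields exactly the stated parameters, and I verified both counts $\lambda=e_2=q^4-2q^3-q^2+3q$ and $\mu=q^4-2q^3+q$. The paper itself imports this theorem from the cited reference without proof, but your argument is precisely the specialization to $n=2$ of the machinery the paper develops for general $n$ (Lemma~\ref{en-lemma}, Theorem~\ref{bijective}, Lemma~\ref{theorem-1}), so this is essentially the same approach.
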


\begin{lem}[See \cite{kiani2015unitary}]\label{en-lemma}
Let $n$ be a positive integer, $F$ be a finite field, then $G=Cay(M_{n}(F),GL_{n}(F))$ is $|GL_{n}(F)|$-regular and for any two adjacent vertices $x,y$, there exists $e_{n}$ vertices adjacent to both of them.
\end{lem}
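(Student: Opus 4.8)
The plan is to handle the two assertions separately, treating regularity first and then the common-neighbour count, exploiting throughout that $G$ is an additive Cayley graph on the group $(M_n(F),+)$ with connection set $GL_n(F)$. This set is symmetric ($A\in GL_n(F)$ iff $-A\in GL_n(F)$) and loop-free (the zero matrix is singular, so $0\notin GL_n(F)$), which makes the Cayley graph well defined and undirected.

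For regularity, I would fix a vertex $A$ and describe its neighbourhood explicitly. A vertex $B$ is adjacent to $A$ precisely when $A-B\in GL_n(F)$, so the neighbours of $A$ are exactly the matrices of the form $A-C$ as $C$ ranges over $GL_n(F)$. Since $C\mapsto A-C$ is a bijection from $GL_n(F)$ onto this neighbourhood, and $0\notin GL_n(F)$ guarantees that $A$ is not counted among its own neighbours, the degree of $A$ equals $|GL_n(F)|$. As this holds for every $A$, the graph is $|GL_n(F)|$-regular.

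For the common-neighbour count, I would first use translation invariance to normalise the edge. The map $z\mapsto z-y$ is a graph automorphism carrying the adjacent pair $\{x,y\}$ to $\{D,0\}$, where $D:=x-y\in GL_n(F)$, so it suffices to count the vertices $z$ adjacent to both $0$ and $D$. Such a $z$ satisfies $-z\in GL_n(F)$, equivalently $z\in GL_n(F)$, together with $D-z\in GL_n(F)$. The key step is to factor out the invertible matrix $D$: the substitution $z=Du$ is a bijection of $GL_n(F)$ to itself, and under it $D-z=D(I-u)$, so $D-z\in GL_n(F)$ iff $I-u\in GL_n(F)$. Hence the number of common neighbours equals $|\{u\in M_n(F): u\in GL_n(F)\text{ and }I-u\in GL_n(F)\}|$, a quantity that no longer depends on the particular edge $D$.

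Finally I would identify this count with $e_n$. The condition $u\in GL_n(F)$ says that $0$ is not an eigenvalue of $u$, while $I-u\in GL_n(F)$ is equivalent to $\det(I-u)\neq 0$, i.e. $1$ is not an eigenvalue of $u$. Thus $u$ ranges exactly over the linear derangements of $M_n(F)$, whose number is $e_n$ by definition, which yields the claim. The main obstacle is the normalisation step: the substitution $z=Du$ is what simultaneously strips away all dependence on the chosen edge and exposes the eigenvalue constraints, so that the identical value $e_n$ is obtained for every adjacent pair.
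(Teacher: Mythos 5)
Your proof is correct. Note that the paper states this lemma without proof, importing it from \cite{kiani2015unitary}; your argument --- regularity from the symmetric, loop-free connection set $GL_n(F)$, translation of the edge $\{x,y\}$ to $\{D,0\}$ with $D=x-y$, and the substitution $z=Du$ reducing the common-neighbour count to the set of $u$ with $u$ and $I-u$ both invertible, i.e.\ the linear derangements --- is exactly the standard proof of this fact, so it fills the citation faithfully with no gaps.
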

\begin{lem}\label{det-lemma}
Let $A=(a_{1}, a_{2},\dots, a_{n})$ be a $n\times n$ square matrix with $a_{i}$ the $n$-dimensional column vectors for $i=1,\dots,n$, then $A \in GL_{n}(F)$ and $A+E_{11} \notin GL_{n}(F)$ if and only if $\det(v_{1},a_{2},\dots,a_{n}) \neq 0$ and there exists some $k_{i} \in F$ such that ${a_1} = \sum\limits_{i = 2}^n {{k_i}} {a_i} - {v_1}$ for $v_{1}=(1,0,\dots,0)^T$.
\end{lem}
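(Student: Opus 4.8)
The plan is to translate both matrix conditions into determinant statements and exploit multilinearity of the determinant in its first column. Since $E_{11}$ has a single nonzero entry in the $(1,1)$ position, its only nonzero column is the first, equal to $v_1=(1,0,\dots,0)^T$, so I would first record the identity $A+E_{11}=(a_1+v_1,a_2,\dots,a_n)$. Expanding the determinant linearly in the first column then yields the key identity
\[
\det(A+E_{11})=\det(a_1,a_2,\dots,a_n)+\det(v_1,a_2,\dots,a_n)=\det A+\det(v_1,a_2,\dots,a_n).
\]
Everything else follows by reading this identity in both directions together with the standard equivalence between singularity of a matrix and linear dependence of its columns.

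For the forward direction, I would assume $A\in GL_n(F)$ and $A+E_{11}\notin GL_n(F)$, i.e.\ $\det A\neq 0$ and $\det(A+E_{11})=0$. Substituting into the key identity gives $\det(v_1,a_2,\dots,a_n)=-\det A\neq 0$, which is the first asserted condition. For the second condition, I note that $\det(v_1,a_2,\dots,a_n)\neq 0$ forces $v_1,a_2,\dots,a_n$—and in particular $a_2,\dots,a_n$—to be linearly independent; meanwhile singularity of $A+E_{11}$ makes its columns $a_1+v_1,a_2,\dots,a_n$ linearly dependent. Combining these, the dependence relation must carry a nonzero coefficient on $a_1+v_1$, so $a_1+v_1$ lies in the span of $a_2,\dots,a_n$, giving $a_1=\sum_{i=2}^n k_i a_i-v_1$ for suitable $k_i\in F$.

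For the converse, I would assume the two stated conditions hold. The relation $a_1=\sum_{i=2}^n k_i a_i-v_1$ exhibits $a_1+v_1$ as a combination of $a_2,\dots,a_n$, so the columns of $A+E_{11}$ are dependent and $\det(A+E_{11})=0$, whence $A+E_{11}\notin GL_n(F)$. Feeding $\det(A+E_{11})=0$ back into the key identity and using $\det(v_1,a_2,\dots,a_n)\neq 0$ yields $\det A=-\det(v_1,a_2,\dots,a_n)\neq 0$, so $A\in GL_n(F)$. The one place demanding care is the forward direction, where concluding that $a_1+v_1$ is genuinely a combination of $a_2,\dots,a_n$ relies on the linear independence of $a_2,\dots,a_n$; this is exactly what the hypothesis $\det(v_1,a_2,\dots,a_n)\neq 0$ guarantees, so the two stated conditions are needed jointly rather than separately.
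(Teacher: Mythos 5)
Your proof is correct and follows essentially the same route as the paper's: both hinge on the observation that $E_{11}$ only perturbs the first column, the column-linearity identity $\det(A+E_{11})=\det A+\det(v_1,a_2,\dots,a_n)$, and the equivalence between singularity and linear dependence of columns. Your version is a slightly more streamlined, direct presentation of what the paper does via two contradiction arguments, so no substantive difference.
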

\begin{proof}
For the necessity, suppose $\det({{v}_{1}},{{a}_{2}},\ldots ,{{a}_{n}})=0$, then $v_{1}$ can be written as a linear combination of $a_{2},\dots,a_{n}$ since $a_{2},\dots,a_{n}$ are linearly independent, then we have
$0 = \det({a_1} + {v_1},{a_2}, \ldots ,{a_n}) = \det({a_1},{a_2}, \ldots ,{a_n})$, a contradiction. Therefore $\det({{v}_{1}},{{a}_{2}},\ldots ,{{a}_{n}})\ne 0$ and $a_{1}+v_{1}$ can be written as a linear combination of $a_{2},\dots,a_{n}$ i.e. there exists some $k_{i} \in F$ such that ${a_1} = \sum\limits_{i = 2}^n {{k_i}} {a_i} - {v_1}$.

For the sufficiency, it is clear that $a_{2},\dots,a_{n}$ are linearly independent and $A+E_{11} \notin GL_{n}(F)$. Suppose $\det(A)=0$, then $a_{1}$ can be written as a linear combination of $a_{2},\dots,a_{n}$, implies
  $\det({v_1},{a_2}, \ldots ,{a_n}) = \det({a_1} + {v_1},{a_2}, \ldots ,{a_n}) = 0$, a contradiction. Therefore $\det(A)\ne 0$, implies $A\in G{{L}_{n}}(F).$
\end{proof}
\section{Results}
Let $F$ be a finite field of order $q$.
\begin{lem}\label{theorem-1}
$|({E_{11}} + G{L_n}(F)) \cap G{L_n}(F)| = ({q^n} - {q^{n - 1}} - 1)\prod\limits_{k = 1}^{n - 1} {({q^n} - {q^k})} $.
\end{lem}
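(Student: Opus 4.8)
The plan is to count by complementation relative to the full linear group. Observe first that a matrix $B$ lies in $(E_{11}+GL_n(F))\cap GL_n(F)$ precisely when $B\in GL_n(F)$ and $B-E_{11}\in GL_n(F)$. Writing $A=B-E_{11}$, this is the same as requiring $A\in GL_n(F)$ and $A+E_{11}\in GL_n(F)$, and the substitution $B\mapsto A$ is a bijection between the two described sets. Hence
$$|(E_{11}+GL_n(F))\cap GL_n(F)| = |GL_n(F)| - |\{A\in GL_n(F): A+E_{11}\notin GL_n(F)\}|,$$
so it suffices to count the matrices $A$ that are invertible but become singular after adding $E_{11}$, a set that is exactly characterized by Lemma \ref{det-lemma}.

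Next I would count the complementary set $S=\{A\in GL_n(F): A+E_{11}\notin GL_n(F)\}$ directly from that characterization. Writing $A=(a_1,\dots,a_n)$ in columns, Lemma \ref{det-lemma} says $A\in S$ if and only if $\det(v_1,a_2,\dots,a_n)\neq 0$ and $a_1=\sum_{i=2}^n k_i a_i - v_1$ for some scalars $k_i\in F$. The first condition means exactly that $v_1,a_2,\dots,a_n$ are linearly independent, so I would count the admissible tuples $(a_2,\dots,a_n)$ by building them one column at a time, each new column avoiding the span generated so far together with $v_1$: this gives $(q^n-q)(q^n-q^2)\cdots(q^n-q^{n-1})=\prod_{k=1}^{n-1}(q^n-q^k)$ choices. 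For each fixed admissible $(a_2,\dots,a_n)$, the first column $a_1$ ranges over the $q^{n-1}$ vectors of the form $\sum_{i=2}^n k_i a_i - v_1$ as $(k_2,\dots,k_n)$ runs over $F^{n-1}$.

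The key point, and the only place requiring care, is that this description counts each $A\in S$ exactly once. Because $a_2,\dots,a_n$ are linearly independent, the coefficients $k_2,\dots,k_n$ are uniquely determined by $a_1$ through the relation $a_1+v_1=\sum_{i=2}^n k_i a_i$; thus the assignment $(a_2,\dots,a_n;k_2,\dots,k_n)\mapsto A$ is a bijection onto $S$, and no overcounting occurs. This yields $|S|=q^{n-1}\prod_{k=1}^{n-1}(q^n-q^k)$.

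Finally I would substitute into the complementation identity. Using $|GL_n(F)|=\prod_{k=0}^{n-1}(q^n-q^k)=(q^n-1)\prod_{k=1}^{n-1}(q^n-q^k)$, the difference $|GL_n(F)|-|S|$ factors as $\bigl((q^n-1)-q^{n-1}\bigr)\prod_{k=1}^{n-1}(q^n-q^k)=(q^n-q^{n-1}-1)\prod_{k=1}^{n-1}(q^n-q^k)$, which is the claimed formula. The main obstacle is the bookkeeping in the bijection argument of the previous step; once uniqueness of the $k_i$ is established, the two cardinalities combine immediately.
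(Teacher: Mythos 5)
Your proposal is correct and follows essentially the same route as the paper: both pass to the complement $\{A\in GL_n(F):A+E_{11}\notin GL_n(F)\}$, apply Lemma \ref{det-lemma} to count it as $q^{n-1}\prod_{k=1}^{n-1}(q^n-q^k)$, and subtract from $|GL_n(F)|$. Your explicit verification that the $k_i$ are uniquely determined (so no overcounting occurs) is a welcome refinement of a step the paper leaves implicit, but the argument is otherwise identical.
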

\begin{proof}
For convenience, let $N_{1}$ be the number of matrices $A$ such that $A \in GL_{n}(F)$ and $E_{11}+A \notin GL_{n}(F)$, $N_{2}$ be the number of vector collections $\{a_{2},\dots,a_{n}\}$ such that $\det(v_{1},a_{2},\dots,a_{n}) \neq 0$ and $N_{3}$ be the number of $F$-linear combinations of any $n-1$ linear independent vectors.

By lemma \ref{det-lemma}, we have ${N_1} = {N_2}{N_3}$. It is clear that ${N_3} = {q^{n - 1}}$. For $N_{2}$, to construct such a matrix, for $2\leq k \leq n$, the $k^{th}$ column can be any vector in $F^n$ except for the $q^{k-1}$ linear combinations of the previous $k-1$ columns, hence ${N_2} = ({q^n} - q)({q^n} - {q^2}) \ldots ({q^n} - {q^{n - 1}}) = \prod\limits_{k = 1}^{n - 1} {({q^n} - {q^k})} $.

Therefore
 \begin{align}
  |({E_{11}} + G{L_n}(F)) \cap G{L_n}(F)|  &= |{E_{11}} + G{L_n}(F)| - {N_1}   \nonumber     \\
             &= |G{L_n}(F)| - {N_1} \nonumber\\
             &= \prod\limits_{k = 1}^n {({q^n} - {q^{k - 1}})}  - {q^{n - 1}}\prod\limits_{k = 1}^{n - 1} {({q^n} - {q^k})}\nonumber \\
             &= ({q^n} - 1)\prod\limits_{k = 2}^n {({q^n} - {q^{k - 1}})}  - {q^{n - 1}}\prod\limits_{k = 1}^{n - 1} {({q^n} - {q^k})}\nonumber  \\
             &= ({q^n} - 1)\prod\limits_{k = 1}^{n - 1} {({q^n} - {q^k})}  - {q^{n - 1}}\prod\limits_{k = 1}^{n - 1} {({q^n} - {q^k})} \nonumber \\
             &= ({q^n} - {q^{n - 1}} - 1)\prod\limits_{k = 1}^{n - 1} {({q^n} - {q^k})}
             \nonumber
\end{align}
\end{proof}
\begin{lem}\label{lemma-2}
  $|(diag\{ 1,1,0, \ldots ,0\}  + G{L_n}(F)) \cap G{L_n}(F)|= ({q^{2n}} - {q^{2n - 1}} - {q^{2n - 2}} + {q^{2n - 3}} + {q^{n - 1}} - {q^{n + 1}} + q)\prod\limits_{k = 2}^{n - 1} {({q^n} - {q^k})}$.

\end{lem}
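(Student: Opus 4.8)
The plan is to count directly the set $\mathcal{A}=\{A\in GL_n(F): A+D\in GL_n(F)\}$, where $D=diag\{1,1,0,\dots,0\}=E_{11}+E_{22}$. Writing $A=(a_1,\dots,a_n)$ in columns, adding $D$ perturbs only the first two columns, so that $A+D=(a_1+v_1,\,a_2+v_2,\,a_3,\dots,a_n)$ with $v_1=(1,0,\dots,0)^T$ and $v_2=(0,1,0,\dots,0)^T$. Both $A\in GL_n(F)$ and $A+D\in GL_n(F)$ force $a_3,\dots,a_n$ to be linearly independent, so I would first fix an independent $(n-2)$-tuple $(a_3,\dots,a_n)$, set $W=\langle a_3,\dots,a_n\rangle$, and work in the $2$-dimensional quotient $F^n/W$. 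Writing $\overline{x}$ for the image of $x$, the two invertibility conditions become exactly that $(\overline{a_1},\overline{a_2})$ is a basis of $F^n/W$ and $(\overline{a_1}+\overline{v_1},\,\overline{a_2}+\overline{v_2})$ is a basis of $F^n/W$. Since $a_1\mapsto\overline{a_1}$ and $a_2\mapsto\overline{a_2}$ are each $|W|=q^{n-2}$-to-one, the number of admissible $(a_1,a_2)$ for a fixed $W$ equals $q^{2n-4}$ times the number of such pairs $(\overline{a_1},\overline{a_2})$ in $F^n/W$.

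The count of admissible pairs in the quotient depends only on $d:=\dim(W\cap\langle v_1,v_2\rangle)\in\{0,1,2\}$, through the images $\overline{v_1},\overline{v_2}$. When $d=0$ the pair $(\overline{v_1},\overline{v_2})$ is a basis, so in suitable coordinates the second condition is $B+I_2\in GL_2(F)$ with $B=(\overline{a_1}\mid\overline{a_2})$; the substitution $B\mapsto -B$ shows this count equals the number of linear derangements in $M_2(F)$, giving $c_0=e_2$. When $d=2$ we have $\overline{v_1}=\overline{v_2}=0$, the two conditions coincide, and $c_2=|GL_2(F)|=(q^2-1)(q^2-q)$. When $d=1$ the vectors $\overline{v_1},\overline{v_2}$ span a line and are not both zero; here I would expand the second determinant multilinearly and count directly, and I expect $c_1=(q^2-q)(q^2-q-1)$ independently of the sub-case.

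It then remains to count, for each $d$, the number $T_d$ of independent $(n-2)$-tuples $(a_3,\dots,a_n)$ with $\dim(W\cap\langle v_1,v_2\rangle)=d$. I would write $T_d=\beta\cdot W_d$, where $\beta=\prod_{i=0}^{n-3}(q^{n-2}-q^i)$ is the number of ordered bases of an $(n-2)$-dimensional space and $W_d$ is the number of $(n-2)$-dimensional subspaces meeting the fixed plane $\langle v_1,v_2\rangle$ in dimension $d$; the latter is given by the standard Gaussian-binomial formula, and in particular $W_0=q^{2n-4}$, so that $T_0=\prod_{k=2}^{n-1}(q^n-q^k)$. The desired cardinality is then $|\mathcal{A}|=q^{2n-4}\bigl(c_0T_0+c_1T_1+c_2T_2\bigr)$, and after substituting the values of $c_0,c_1,c_2$ and of the $W_d$ one factors out $\prod_{k=2}^{n-1}(q^n-q^k)$ to reach the stated expression.

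The main obstacle is twofold. First, the case $d=1$ requires checking that the quotient count is genuinely independent of its sub-cases (only $\overline{v_1}\neq0$, only $\overline{v_2}\neq0$, or both nonzero and proportional); this needs a careful multilinear expansion of $[\overline{a_1}+\overline{v_1},\,\overline{a_2}+\overline{v_2}]$ and a short direct enumeration. Second, and more seriously, the final assembly is a bookkeeping problem: the cofactor $q^{2n-4}c_0+c_1W_1+c_2W_2$ is a sum of three pieces contributing at essentially every degree from $1$ up to $2n$, yet almost all intermediate terms must cancel, leaving only the sparse polynomial $q^{2n}-q^{2n-1}-q^{2n-2}+q^{2n-3}+q^{n-1}-q^{n+1}+q$. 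Making these cancellations explicit forces the Gaussian-binomial identities for $W_1,W_2$ to be handled exactly rather than up to leading order.
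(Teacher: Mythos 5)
Your proposal is correct and reaches the stated formula by a genuinely different route from the paper. The paper rewrites the condition $A+D\in GL_n(F)$ as $I_n+A^{-1}D\in GL_n(F)$ and uses the bijection $A\mapsto A^{-1}$ of $GL_n(F)$ to reduce everything to the single condition $I_2+A_1\in GL_2(F)$ on the leading $2\times 2$ principal submatrix $A_1$ of $A$; it then stratifies $GL_n(F)$ by $\mathrm{rank}(A_1)\in\{0,1,2\}$ and counts the extensions of each admissible $A_1$ to an invertible matrix. You instead never invert $A$: you fix the last $n-2$ columns, pass to the $2$-dimensional quotient $F^n/W$, and stratify by $d=\dim(W\cap\langle v_1,v_2\rangle)$. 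Both obstacles you flag do resolve, and more easily than you anticipate. For $d=1$ no multilinear expansion is needed: your quotient count is $|(R+GL_2(F))\cap GL_2(F)|$ for $R=(\overline{v_1}\mid\overline{v_2})$ a rank-one $2\times 2$ matrix, so the $P,Q$-equivalence argument of Theorem \ref{bijective} reduces all sub-cases to $R=E_{11}$, and Lemma \ref{theorem-1} with $n=2$ gives $c_1=(q^2-q)(q^2-q-1)=q^4-2q^3+q$ uniformly; the constants $c_0=e_2$ and $c_2=|GL_2(F)|$ are the $d=0,2$ instances of the same principle. For the assembly, with $W_1=q^{n-3}(q+1)(q^{n-2}-1)/(q-1)$ and $W_2=(q^{n-2}-1)(q^{n-3}-1)/\bigl((q^2-1)(q-1)\bigr)$ one gets $c_0W_0=e_2q^{2n-4}$, $c_1W_1=(q^3-2q-1)(q^{2n-4}-q^{n-2})$ and $c_2W_2=(q^{n-2}-1)(q^{n-2}-q)$, whose sum is exactly $q^{2n}-q^{2n-1}-q^{2n-2}+q^{2n-3}+q^{n-1}-q^{n+1}+q$; these three terms coincide one-for-one with the paper's three rank cases ($d$ corresponding to rank $2-d$), so the two stratifications are dual. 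What the paper's inversion trick buys is that the extension counts are immediate and no subspace counting is needed; what your version buys is that it is purely column-geometric and makes the meaning of the three constants transparent as common-neighbour counts in $G_{M_2(F)}$.
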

\begin{proof}
For convenience, let $D=diag\{1,1,0,\dots,0\}$, then $|(D  + G{L_n}(F)) \cap G{L_n}(F)|$ is the number of matrices $A$ such that $A \in GL_{n}(F)$ and $A+D \in GL_{n}(F)$. Let $A=(a_{ij}) \in GL_{n}(F)$, then $A+D \in GL_{n}(F)$ if and only if $ I_n+A^{-1}D \in GL_{n}(F).$ Therefore $$|(D  + G{L_n}(F)) \cap G{L_n}(F)| = |\{ A = ({a_{ij}}) \in G{L_n}(F)|I_n + AD \in G{L_n}(F)\} |.$$ Obviously, $I_n + AD \in G{L_n}(F) $ if and only if $ \left( {\begin{array}{*{20}{c}}
{{a_{11}} + 1}&{{a_{12}}}\\
{{a_{21}}}&{{a_{22}} + 1}
\end{array}} \right) \in G{L_2}(F)$, let ${A_1} = \left( {\begin{array}{*{20}{c}}
{{a_{11}}}&{{a_{12}}}\\
{{a_{21}}}&{{a_{22}}}
\end{array}} \right)$, hence $|(D  + G{L_n}(F)) \cap G{L_n}(F)| = |\{ A \in G{L_n}(F)|I_2 + {A_1} \in G{L_2}(F)\} |$.

By Lemma \ref{en-lemma}, the number of matrices $A_{1}$ such that $A_{1} \in GL_{2}(F)$ and $A_{1}+I_2 \in GL_{2}(F)$ is ${e_2} = {q^4} - 2{q^3} - {q^2} = 3q$. Let $M_{1}$ be the number of matrices $B = ({b_{ij}}) \in M_2(F)$ such that
 \begin{center}
  $\left( {\begin{array}{*{20}{c}}
{{b_{11}} + 1}&{{b_{12}}}\\
{{b_{21}}}&{{b_{22}} + 1}
\end{array}} \right)\in GL_{2}(F)$.
\end{center}

Now we determine the value of $M_1$, to construct such a matrix, we can choose any vector in $F^2$ except $(-1,0)^T$ as the first column of $B$, the second column of $B$ can be any vector in $F^2$ except for the $q$ linear combinations of the first column, hence ${M_1} = ({q^2} - 1)({q^2} - q)$.

Consider a matrix $A \in (D  + G{L_n}(F)) \cap G{L_n}(F)$, the rank of $A$'s $2^{nd}$ leading principal submatrix must be $0$, $1$ and $2$. To determine $|(D  + G{L_n}(F)) \cap G{L_n}(F)|$, we distinguish cases pertaining to the rank $r$ of $A$'s $2^{nd}$ leading principal submatrix in the following.
\begin{enumerate}[{\bf {Case} 1.}]
  \item For $r=2$, let $A_{1}$ be a matrix such that $A_{1} \in GL_{2}(F)$ and $A_{1}+I \in GL_{2}(F)$, by similar discussion with $N_2$ in the proof of Lemma \ref{theorem-1}, the number of matrices in $GL_{n}(F)$ have $A_{1}$ as their $2^{nd}$ leading principal submatrix is ${q^{2n - 4}}\prod\limits_{k = 2}^{n - 1} {({q^n} - {q^k})} .$ Therefore, the number of matrices $A \in GL_{n}(F)$ such that $A$ has invertible $2^{nd}$ leading principal submatrix and $A+D \in GL_{n}(F)$ is ${e_2}{q^{2n - 4}}\prod\limits_{k = 2}^{n - 1} {({q^n} - {q^k})}.$
  \item For $r=0$, by similar discussion with $N_2$ in the proof of Lemma \ref{theorem-1}, the number of matrices $A \in GL_{n}(F)$ such that $A$ has $\textbf{0}$ as its $2^{nd}$ leading principal submatrix and $A+D \in GL_{n}(F)$ is $({q^{n - 2}} - 1)({q^{n - 2}} - q)\prod\limits_{k = 2}^{n - 1} {({q^n} - {q^k})} .$
  \item For $r=1$, by the results of cases above, the number of matrices $A \in GL_{n}(F)$ such that the rank of $A$'s $2^{nd}$ leading principal submatrix equals to $1$ and $A+D \in GL_{n}(F)$ is $({M_1} - {e_2} - 1){q^{n - 2}}({q^{n - 2}} - 1)\prod\limits_{k = 2}^{n - 1} {({q^n} - {q^k})}  = [({q^2} - 1)({q^2} - q) - {e_2} - 1]{q^{n - 2}}({q^{n - 2}} - 1)\prod\limits_{k = 2}^{n - 1} {({q^n} - {q^k})} $.
\end{enumerate}

Therefore $|(diag\{ 1,1,0, \ldots ,0\}  + G{L_n}(F)) \cap G{L_n}(F)| = \{ {e_2}{q^{2n - 4}} + ({q^{n - 2}} - 1)({q^{n - 2}} - q) + [({q^2} - 1)({q^2} - q) - {e_2} - 1]{q^{n - 2}}({q^{n - 2}} - 1)\} \prod\limits_{k = 2}^{n - 1} {({q^n} - {q^k})} = ({q^{2n}} - {q^{2n - 1}} - {q^{2n - 2}} + {q^{2n - 3}} + {q^{n - 1}} - {q^{n + 1}} + q)\prod\limits_{k = 2}^{n - 1} {({q^n} - {q^k})}$.
\end{proof}
\begin{thm}\label{bijective}
$A$ and $B$ are two non-adjacent vertices of $G_{M_n}(F)$, then the number of paths of length $2$ between $A$ and $B$ is \[ \left| {\left( {\left( {\begin{array}{*{20}{c}}
{{I_r}}&0\\
0&0
\end{array}} \right) + G{L_n}(F)} \right) \cap G{L_n}(F)} \right|\]
where $r=rank(A-B)$.
\end{thm}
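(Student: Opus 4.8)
The plan is to first recast the number of length-$2$ paths as an intersection count of exactly the shape appearing on the right-hand side, and then to strip away the dependence on the particular matrix $A-B$ by passing to its rank normal form.

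First I would note that a length-$2$ path between $A$ and $B$ is the same datum as a common neighbour $C$, i.e. a vertex with $C-A \in GL_n(F)$ and $C-B \in GL_n(F)$. Setting $X = C-A$ and $D = A-B$, the two conditions become $X \in GL_n(F)$ and $X + D \in GL_n(F)$, and $C \mapsto X$ is a bijection from the set of common neighbours onto $\{X \in GL_n(F) : X + D \in GL_n(F)\}$. A further substitution $Y = X + D$ identifies this set bijectively with $(D + GL_n(F)) \cap GL_n(F)$, so the number of length-$2$ paths equals $|(D + GL_n(F)) \cap GL_n(F)|$.

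Next I would use that $A$ and $B$ are non-adjacent, so $D = A - B \notin GL_n(F)$ and hence $r := \mathrm{rank}(D) < n$. Over a field every matrix of rank $r$ is equivalent to $D_r := \left(\begin{smallmatrix} I_r & 0 \\ 0 & 0 \end{smallmatrix}\right)$, i.e. there exist $P, Q \in GL_n(F)$ with $D = P D_r Q$. The key step is the bijection that gives the theorem its name: the map $X \mapsto P^{-1} X Q^{-1}$ carries $GL_n(F)$ onto itself, and from the identity $X + D = P(P^{-1} X Q^{-1} + D_r)Q$ one reads off that $X + D \in GL_n(F)$ if and only if $P^{-1} X Q^{-1} + D_r \in GL_n(F)$. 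Hence this map restricts to a bijection from $\{X \in GL_n(F) : X + D \in GL_n(F)\}$ onto $\{X' \in GL_n(F) : X' + D_r \in GL_n(F)\}$, which yields $|(D + GL_n(F)) \cap GL_n(F)| = |(D_r + GL_n(F)) \cap GL_n(F)|$ and completes the argument.

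The step I expect to require the most care is checking that the two-sided transformation $X \mapsto P^{-1} X Q^{-1}$ preserves \emph{both} constraints at once — that $X$ is invertible and that $X + D$ is invertible — rather than only one of them. The identity $X + D = P(P^{-1} X Q^{-1} + D_r)Q$ is precisely what makes the two preservations compatible, and I would write it out explicitly so that the equivalence of the two membership conditions, and therefore the equality of the two cardinalities, is transparent.
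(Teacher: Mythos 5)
Your proposal is correct and follows essentially the same route as the paper: translate the common-neighbour count to $\left|\left(D+GL_n(F)\right)\cap GL_n(F)\right|$ where $D=A-B$, then use the rank normal form $D=PD_rQ$ and two-sided multiplication by $P^{-1},Q^{-1}$ to reduce to the case $D_r=\left(\begin{smallmatrix}I_r&0\\0&0\end{smallmatrix}\right)$. The only cosmetic difference is that you work with $\{X\in GL_n(F):X+D\in GL_n(F)\}$ and shift by $D$ at the end, whereas the paper maps the shifted set directly via $h\mapsto PhQ$; your explicit identity $X+D=P\left(P^{-1}XQ^{-1}+D_r\right)Q$ makes the step the paper calls ``obviously bijective'' fully transparent.
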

\begin{proof}
Let $N(A)$ be the neighbourhood of $A$, $W$ be the number of paths of length $2$ between $A$ and $B$, then
$ W=|N(A)\cap N(B)|=|(A+G{{L}_{n}}(F))\cap (B+G{{L}_{n}}(F))|.$ Let $T=(A+GL_n(F)) \cap(B+GL_n(F))$, $H=(A-B+G{{L}_{n}}(F))\cap G{{L}_{n}}(F)$. Consider map
\begin{center}
$\begin{array}{l}
\phi :T \to H\\
\;\;M \mapsto M - B
\end{array}$
\end{center}

It is obvious that $\phi$ is injective. For all $K\in H$ we have $K=A-B+X$ with $X\in GL_n(F)$, then $\phi$ is surjective since $\phi(A+X)=K$, therefore $\phi$ is a bijection, implies $W=|H|$.

There exists $P,Q\in G{{L}_{n}}(F)$ such that $P(A-B)Q=
	\begin{pmatrix}
	I_r & 0\\0& 0
	\end{pmatrix}$
 where $r=r(A-B)$.
 Let $S = \left( {\left( {\begin{array}{*{20}{c}}
{{I_r}}&0\\
0&0
\end{array}} \right) + G{L_n}(F)} \right) \cap G{L_n}(F)$, we define the map $\psi$ as follows
\begin{center}
$\begin{array}{l}
\psi :H \to S\\
\;\;\;\;h\;\;\; \mapsto PhQ
\end{array}$
\end{center}
 $\psi$ is obviously bijective, therefore $W = |S| = \left| {\left( {\left( {\begin{array}{*{20}{c}}
{{I_r}}&0\\
0&0
\end{array}} \right) + G{L_n}(F)} \right) \cap G{L_n}(F)} \right|.$
\end{proof}
\begin{thm}\label{notsrg}
For $n>2$, ${{G}_{{{M}_{n}}}}(F)$ is not SRG.
\end{thm}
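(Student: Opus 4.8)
The plan is to exploit Theorem~\ref{bijective}, which shows that the number of common neighbors of two non-adjacent vertices $A,B$ depends only on $r=\operatorname{rank}(A-B)$ and equals
$f(r):=\left|\left(\left(\begin{smallmatrix} I_r & 0 \\ 0 & 0 \end{smallmatrix}\right)+GL_n(F)\right)\cap GL_n(F)\right|$. For $G_{M_n}(F)$ to be strongly regular the parameter $\mu$ must be a single constant; equivalently, $f(r)$ must take one and the same value for every rank $r$ that occurs as $\operatorname{rank}(A-B)$ over non-adjacent pairs. Since every matrix of $M_n(F)$ arises as a difference $A-B$ (take $B=0$), the attainable ranks are exactly $r\in\{1,2,\dots,n-1\}$, and when $n>2$ this set contains both $1$ and $2$. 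Thus it suffices to realize these two ranks by concrete non-adjacent pairs (e.g.\ $B=0$ with $A=E_{11}$ for $r=1$, and $A=\operatorname{diag}\{1,1,0,\dots,0\}$ for $r=2$) and then to prove $f(1)\neq f(2)$.

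First I would identify $f(1)$ and $f(2)$ with quantities already computed. Because $\left(\begin{smallmatrix} I_1 & 0 \\ 0 & 0 \end{smallmatrix}\right)=E_{11}$, Lemma~\ref{theorem-1} gives $f(1)=(q^n-q^{n-1}-1)\prod_{k=1}^{n-1}(q^n-q^k)$; and because $\left(\begin{smallmatrix} I_2 & 0 \\ 0 & 0 \end{smallmatrix}\right)=\operatorname{diag}\{1,1,0,\dots,0\}$, Lemma~\ref{lemma-2} gives $f(2)=(q^{2n}-q^{2n-1}-q^{2n-2}+q^{2n-3}+q^{n-1}-q^{n+1}+q)\prod_{k=2}^{n-1}(q^n-q^k)$.

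Next I would compare the two. Writing $P=\prod_{k=2}^{n-1}(q^n-q^k)$ and splitting the $k=1$ factor off the product in $f(1)$ as $\prod_{k=1}^{n-1}(q^n-q^k)=(q^n-q)P$, both expressions share the common factor $P$, which is a nonempty strictly positive product for $n\ge 3$. A short expansion of $(q^n-q^{n-1}-1)(q^n-q)$ then collapses the difference to
\[
f(1)-f(2)=q^{n-1}\big(q^{n-1}-q^{n-2}-1\big)\,P .
\]
Since $q\ge 2$, the residual factor $q^{n-1}-q^{n-2}-1=q^{n-2}(q-1)-1$ is strictly positive for $n\ge 3$, and $P>0$, so $f(1)-f(2)>0$; in particular $f(1)\neq f(2)$. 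Hence $\mu$ cannot be a single constant, and $G_{M_n}(F)$ fails to be strongly regular for every $n>2$.

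The only delicate point is the algebraic comparison displayed above: the two formulas look structurally different because one carries the product from $k=1$ and the other from $k=2$, so the real care lies in factoring out the common $\prod_{k=2}^{n-1}(q^n-q^k)$ before expanding. Once this is done, the surviving polynomial $q^{n-1}(q^{n-1}-q^{n-2}-1)$ has an unambiguous sign, and the conclusion is immediate.
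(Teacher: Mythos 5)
Your proof is correct and follows essentially the same route as the paper: both compare the common-neighbor counts for the non-adjacent pairs $(E_{11},0)$ and $(\mathrm{diag}\{1,1,0,\dots,0\},0)$ via Theorem~\ref{bijective}, Lemma~\ref{theorem-1} and Lemma~\ref{lemma-2}. Your explicit factorization $f(1)-f(2)=q^{n-1}(q^{n-1}-q^{n-2}-1)\prod_{k=2}^{n-1}(q^n-q^k)$ checks out and is in fact a cleaner justification of the inequality than the paper's bare assertion that $q^{2n-3}+q^{n-1}-q^{2n-2}\neq 0$.
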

\begin{proof}
Let $A={{E}_{11}}$, $B=diag\{1,1,0,\cdots ,0\}$, $A$ and $\textbf{0}$ are two non-adjacent vertices in ${{G}_{{{M}_{n}}}}(F)$, neither does $B$, we have

\begin{align}
  |N(A) \cap N(O)|  &= |({E_{11}} + G{L_n}(F)) \cap G{L_n}(F)|\label{e1} \\
   &=({q^n} - {q^{n - 1}} - 1)\prod\limits_{{\rm{k}} = 1}^{n - 1} {({q^n} - {q^k})} \label{se1}
\end{align}
\begin{align}
  |N(B) \cap N(O)|  &= |(diag\{ 1,1,0,...,0\}  + G{L_n}(F)) \cap G{L_n}(F)| \label{e2}\\
   &=({q^{2n}} - {q^{2n - 1}} - {q^{2n - 2}} + {q^{2n - 3}} + {q^{n - 1}} - {q^{n + 1}} + q)\prod\limits_{k = 2}^{n - 1} {({q^n} - {q^k})} \label{se2}
\end{align}
Where (\ref{e1}) and (\ref{e2}) by Theorem \ref{bijective}, (\ref{se1}) by Lemma \ref{theorem-1} and (\ref{se2}) by Lemma \ref{lemma-2}.

For $n>2$, we have $|N(A) \cap N(O)|\neq |N(B) \cap N(O)|$ since ${{q}^{2n-3}}+{{q}^{n-1}}-{{q}^{2n-2}}\ne 0$, hence ${{G}_{{{M}_{n}}}}(F)$ is not SRG.
\end{proof}

\begin{thm}
$Cay(M_{n}(F),GL_{n}(F))$ is SRG if and only if $n=2$.
\end{thm}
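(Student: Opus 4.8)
The plan is to prove the two implications of the equivalence separately, drawing only on results already established. The backward direction is immediate: if $n=2$, then Theorem~\ref{DKthm} exhibits $G_{M_2}(F)$ as a strongly regular graph with the explicit parameters $(q^4,\,q^4-q^3-q^2+q,\,q^4-2q^3-q^2+3q,\,q^4-2q^3+q)$, so no further work is required for this direction.

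For the forward direction I would argue the contrapositive, splitting into $n\ge 3$ and $n=1$. The case $n\ge 3$ is exactly the content of Theorem~\ref{notsrg}, and it is worth recalling the mechanism: by Theorem~\ref{bijective}, the number of common neighbours of two non-adjacent vertices $A,B$ depends only on $r=\mathrm{rank}(A-B)$ and equals $\bigl|\bigl((I_r\oplus 0)+GL_n(F)\bigr)\cap GL_n(F)\bigr|$. Comparing the non-adjacent pair $\mathbf{0},E_{11}$ (whose difference has rank $1$) with the pair $\mathbf{0},\mathrm{diag}\{1,1,0,\dots,0\}$ (whose difference has rank $2$), Lemmas~\ref{theorem-1} and~\ref{lemma-2} yield two different counts whenever $q^{2n-3}+q^{n-1}-q^{2n-2}\neq 0$, which holds for every $n\ge 3$ and every prime power $q$. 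Thus the candidate parameter $\mu$ cannot be constant, and the graph is not strongly regular.

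It remains to dispose of the degenerate case $n=1$. Here $M_1(F)=F$ and $GL_1(F)=F^{\times}$, so $G_{M_1}(F)$ has vertex set $F$ with any two distinct vertices adjacent; that is, it is the complete graph $K_q$. Under the standard convention that a strongly regular graph is neither complete nor edgeless (equivalently $0<\mu$), $K_q$ is excluded, which finishes the contrapositive and hence the theorem.

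The main obstacle is conceptual rather than computational: all of the genuinely hard counting has already been carried out in Lemmas~\ref{theorem-1} and~\ref{lemma-2} and funnelled through the rank-dependence established in Theorem~\ref{bijective}, so the final theorem is essentially a matter of assembly. The one point demanding care is the $n=1$ case, since a literal reading of the three defining conditions lets the complete graph satisfy them vacuously, there being no non-adjacent pair to constrain $\mu$. I would therefore make the usual exclusion of complete and empty graphs explicit, under which the stated equivalence holds precisely as written.
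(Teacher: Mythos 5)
Your proposal is correct and follows essentially the same route as the paper: Theorem~\ref{DKthm} for sufficiency, Theorem~\ref{notsrg} for $n\ge 3$, and exclusion of the $n=1$ case. In fact you are more careful than the paper on the last point, which merely calls it ``obvious'' that $G_{M_1}(F)$ is not SRG, whereas you correctly identify it as $K_q$ and note that the conclusion rests on the standard convention excluding complete graphs.
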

\begin{proof}
	By Theorem \ref{DKthm}, Theorem \ref{notsrg} and the obvious fact that ${{G}_{{{M}_{1}}}}(F)$ is not SRG.
\end{proof}

Thus far, we have characterized strongly regular unitary Cayley graphs of matrix algebras over finite field $F$.
\section*{Acknowledgements}
The project is supported partially by Hu Xiang Gao Ceng Ci Ren Cai Ju Jiao Gong Cheng-Chuang Xin Ren Cai (No. 2019RS1057).

\nocite{1}
\nocite{*}
\bibliographystyle{plain}

\bibliography{A_note_on_unitary_Cayley_graphs_of_matrix_algebras}

\end{document}